\documentclass{amsart}
\usepackage{amsmath,amssymb}
\usepackage[dvips]{graphics}
\usepackage[dvipdfmx]{graphicx}
\usepackage[all]{xy}

\newtheorem{Theorem}{Theorem}[section]
\newtheorem{Lemma}[Theorem]{Lemma}
\newtheorem{Proposition}[Theorem]{Proposition}

\theoremstyle{definition}

\theoremstyle{remark}
\newtheorem{Remark}[Theorem]{Remark}

\def\labelenumi{\rm ({\makebox[0.8em][c]{\theenumi}})}
\def\theenumi{\arabic{enumi}}
\def\labelenumii{\rm {\makebox[0.8em][c]{(\theenumii)}}}
\def\theenumii{\roman{enumii}}

\renewcommand{\thefigure}
{\arabic{section}.\arabic{figure}}
\makeatletter
\@addtoreset{figure}{section}
\def\@thmcountersep{-}
\makeatother

\numberwithin{equation}{section}

\begin{document} 

\title[Intrinsic linking of simplicial $n$-complexes in $\mathbb{R}^{2n}$]{Intrinsic linking of simplicial $n$-complexes in $\mathbb{R}^{2n}$: An additional minimal $n$-complex}

\author{Ryo Nikkuni}
\address{Department of Information and Mathematical Sciences, School of Arts and Sciences, Tokyo Woman's Christian University, 2-6-1 Zempukuji, Suginami-ku, Tokyo 167-8585, Japan}
\email{nick@lab.twcu.ac.jp}
\thanks{The author was supported by JSPS KAKENHI Grant Number 25K06987.}

\subjclass{Primary 57K45; Secondary 57M15}

\date{}

\dedicatory{}

\keywords{Intrinsic linking, Linking number, van Kampen obstruction}

\begin{abstract}
For any positive integer $n$, the author previously constructed several minimal simplicial $n$-complexes which necessarily contain a non-splittable two-component link, consisting of an $(n-1)$-sphere and an $n$-sphere, in any embedding into $\mathbb{R}^{2n}$. In this paper, we present an additional simplicial $n$-complex with the same property.
\end{abstract}

\maketitle

\section{Introduction} 

Throughout this paper, we work in the piecewise linear category. For the fundamentals of piecewise-linear topology, we refer the reader to \cite{H69} and \cite{RS82}. Let $K$ be a simplicial $n$-complex. In this paper, a simplicial complex is always identified with its polyhedron. Let $m$ be a positive integer and let $p$ and $q$ be non-negative integers satisfying $m = p + q + 1$. Then $K$ may have the property that, no matter how it is embedded in the $m$-dimensional Euclidean space $\mathbb{R}^{m}$, it always contains a non-splittable two-component link consisting of a $p$-sphere and a $q$-sphere. In the case $m = 2n + 1$, every simplicial $n$-complex $K$ can be embedded in $\mathbb{R}^{2n+1}$. Furthermore, the case $p = q = n$ is especially important in higher-dimensional link theory, and many such simplicial $n$-complexes are known (Segal--Spie\.z \cite{SeSp92}, Lov\'{a}sz--Schrijver \cite{LS98}, Skopenkov \cite{Sk03}, Taniyama \cite{T00}, and Nikkuni \cite{N26}). In particular, when $n=1$, $K$ is said to be \textit{intrinsically linked}. Such simplicial $1$-complexes were first discovered by Conway--Gordon \cite{CG83} and Sachs \cite{S84}, and the complete characterization by Robertson--Seymour--Thomas \cite{RST95} is especially well-known.

On the other hand, when $m=2n$, note that $K$ cannot always be embedded in $\mathbb{R}^{2n}$. For $p=n-1$ and $q=n$, an example of such a complex was first constructed by Freedman--Krushkal \cite[Proposition 4.1]{FK14} (see also Freedman--Krushkal--Teichner \cite[Lemma 6]{FKT94}). In particular, when $n=1$, a simplicial complex $K$ possessing this property was characterized by Dehkordi--Farr \cite{DF}. In this context, a simplicial $1$-complex $K$ that does not possess the property is said to be \textit{non-separating planar}. It has been shown that $K$ is non-separating planar if and only if it does not contain a subdivision of any of the graphs $K_{1} \sqcup K_{4}$, $K_{1} \sqcup K_{2,3}$, or $K_{1,1,3}$, as shown in Fig.~\ref{nonsp}. As higher-dimensional generalizations of these simplicial $1$-complexes, for any positive integer $n$, the minimal simplicial $n$-complexes $N_{1}^{(n)}$, $N_{2}^{(n)}$, and $N_{3}^{(n)}$ possessing this property were constructed in the author's previous work \cite{N26b}. Independently, and particularly for $n=2$, Huber--Rao--Schwartz--Vijay showed that the suspension of $K_6$ yields a simplicial $2$-complex possessing this property \cite{HRJV26}.

\begin{figure}[htbp]
\begin{center}
\scalebox{0.475}{\includegraphics*{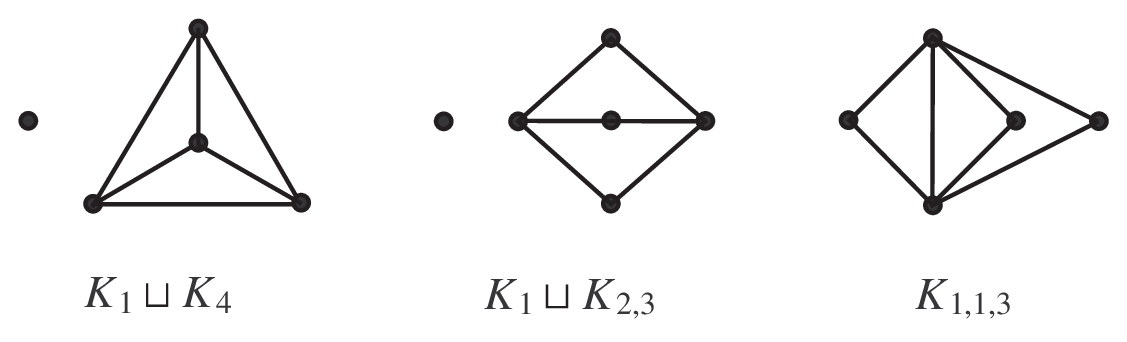}}
\caption{$K_1 \sqcup K_4$, $K_1 \sqcup K_{2,3}$ and $K_{1,1,3}$}
\label{nonsp}
\end{center}
\end{figure}

Our purpose in this paper is to refine and generalize the result of \cite{HRJV26} to higher dimensions by constructing a new minimal simplicial $n$-complex $M^{(n)}$ that possesses the property for every positive integer $n$. In the following, for a simplicial $n$-complex $K$, we denote the set of all $k$-simplices of $K$ by $\varDelta^{k}(K)$. Then $K^{k} = \bigcup_{l \le k} \varDelta^{l}(K)$ is the \textit{$k$-skeleton} of $K$, which is a subcomplex of $K$. Let $f \colon K \to \mathbb{R}^{m}$ be an embedding. Let $\Lambda^{p,q}(K)$ be the set of all unordered pairs $\{\gamma, \delta\}$ of mutually disjoint subcomplexes of $K$ such that $\gamma$ and $\delta$ are homeomorphic to the $p$-sphere and the $q$-sphere, respectively. We identify each unordered pair $\{\gamma,\delta\}$ with the disjoint union $\gamma \sqcup \delta$. For any $\lambda = \gamma \sqcup \delta \in \Lambda^{p,q}(K)$, the image $f(\lambda) = f(\gamma) \sqcup f(\delta)$ forms a two-component link in $f(K)$ consisting of a $p$-sphere and a $q$-sphere. If $p + q = m-1$, the \textit{$\mathbb{Z}_2$-linking number}
\begin{eqnarray*}
{\rm lk}_2(L) = {\rm lk}_2(S^p,S^q) = {\rm lk}_2(S^q,S^p) \in \mathbb{Z}_2
\end{eqnarray*}
of a two-component link $L = S^p \sqcup S^q$ in $\mathbb{R}^m$ is well-defined (cf.\ \cite{ST80}). Let $\sigma_{m} = |a_{0}a_{1}\cdots a_{m}|$ be an $m$-simplex whose vertices are $a_{0},a_{1},\ldots,a_{m}$. Note that $\sigma_{m}$ is itself a simplicial $m$-complex, and its $k$-skeleton is denoted by $\sigma_{m}^{k}$. In particular, when $m=5$ and $k=1$, $\sigma_5^1$ is isomorphic to $K_6$, a graph well-known to be intrinsically linked \cite{CG83}, \cite{S84}. Let $S(K_6)$ denote the suspension of $K_6$, that is, the join of $K_6$ with two additional vertices $a$ and $b$ not belonging to $K_6$. Then, Huber--Rao--Schwartz--Vijay showed the following.

\begin{Theorem}\label{ILR4} 
{\rm (Huber--Rao--Schwartz--Vijay \cite{HRJV26})} 
For every embedding $f \colon S(K_6) \to \mathbb{R}^{4}$, there exists an element $\lambda \in \Lambda^{1,2}(S(K_6))$ such that ${\rm lk}_2(f(\lambda)) = 1$. 
\end{Theorem}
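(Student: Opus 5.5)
The plan is to reduce the statement to the Conway--Gordon--Sachs theorem for $K_6$ in $S^3$ by looking at a small sphere around the image of one suspension vertex.

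\emph{Step 1 (the relevant pairs).} I would first identify $\Lambda^{1,2}(S(K_6))$. Every $2$-simplex of $S(K_6)$ contains $a$ or $b$, since $K_6$ is $1$-dimensional. Let $\delta$ be a subcomplex homeomorphic to $S^2$. Each edge of $K_6$ lying in $\delta$ lies in exactly two triangles of $\delta$; these can only be of the form $|axy|$ and $|bxy|$. Hence the links of $a$ and of $b$ in $\delta$ are one and the same cycle $C$ of $K_6$. Then $\delta = S(C)$, because a cone over $C$ from only one of $a,b$ is a disk and not closed. A circle $\gamma$ disjoint from $\delta$ avoids $a$ and $b$, so $\gamma \subset K_6$ and $\gamma$ is disjoint from $C$. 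In $K_6$ the only pairs of disjoint cycles are pairs of disjoint triangles. So the elements of $\Lambda^{1,2}$ are exactly $T_1 \sqcup S(T_2)$ with $T_1,T_2$ disjoint triangles of $K_6$. It therefore suffices to find such $T_1,T_2$ with ${\rm lk}_2(f(T_1), f(S(T_2)))=1$.

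\emph{Step 2 (link of $a$).} Let $B$ be a small PL $4$-ball around $f(a)$ meeting $f(S(K_6))$ in the cone $f(a * L)$, where $L \cong K_6$ is the link of $a$. Then $\Sigma=\partial B \cong S^3$ meets $f(S(K_6))$ in an embedded copy $f'(K_6)$, namely the vertex $f'(x)$ is the point where the edge $f(|ax|)$ crosses $\Sigma$. By Conway--Gordon \cite{CG83} and Sachs \cite{S84}, there are disjoint triangles $T_1,T_2$ of $K_6$ with ${\rm lk}_2(f'(T_1), f'(T_2)) = 1$ in $\Sigma$.

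\emph{Step 3 (comparison of linking numbers).} I claim that ${\rm lk}_2(f(T_1), f(S(T_2))) = {\rm lk}_2(f'(T_1),f'(T_2))$. The cone $D = a * T_1$ is a subcomplex of $S(K_6)$, and $f(D)$ is a $2$-disk bounded by $f(T_1)$. Since $D \cap S(T_2) = \{a\}$ in $S(K_6)$ and $f$ is an embedding, the images meet only at $f(a)$. Thus ${\rm lk}_2(f(T_1), f(S(T_2)))$ is the mod $2$ intersection number of $f(D)$ and $f(S(T_2))$, and this number is concentrated at the single point $f(a)$.

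Inside $B$ the two surfaces are the cones from $f(a)$ over $f'(T_1)$ and $f'(T_2)$, which are disjoint circles in $\Sigma$. The local intersection number of two such cones at their common cone point equals the linking number of the two boundary circles in $\Sigma$. One sees this by perturbing one cone off the cone point inside $B$, for instance replacing it by a disk in $B$ bounded by $f'(T_1)$ that is pushed slightly into the interior. Combining this with Step 2 gives ${\rm lk}_2(f(T_1 \sqcup S(T_2)))=1$.

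\emph{Main obstacle.} The delicate point is Step 3. It requires making precise that a non-transverse intersection, concentrated at the single cone point, still computes the mod $2$ linking number, and identifying its local contribution with the linking number in $\partial B$. I would handle this with a small general-position perturbation of $f(D)$ supported in $B$, relative to $\partial B$. This turns the local count into the mod $2$ intersection, inside $B$, of a disk spanning $f'(T_1)$ with the cone over $f'(T_2)$, which by definition is ${\rm lk}_2$ in $\partial B \cong S^3$.
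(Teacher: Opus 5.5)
Your argument is correct. It follows a genuinely different route from the paper.

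The paper does not prove this statement itself. It attributes it to Huber--Rao--Schwartz--Vijay, whose proof is of Conway--Gordon type. Within the paper it is recovered as the case $n=2$ of Theorem~\ref{ILR2n}, since $M^{(2)}\subset S(K_6)$ with $c=a_5$. That proof is global and cohomological:
\begin{enumerate}
\item the embedding of $M^{(2)}$ is extended to a generic immersion of $K_5*\{a,b,c\}$;
\item its total number of double points is odd, because the van Kampen obstruction of the join is non-zero and all top cells of $\widetilde{K}^*$ are cohomologous (Lemma~\ref{cohomo});
\item the double points between triangles through $c$ and the remaining triangles are read off as $\mathbb{Z}_2$-linking numbers.
\end{enumerate}

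You instead localize at the suspension vertex $a$ and reduce to Conway--Gordon--Sachs in $\partial B\cong S^3$. The key device is the cone disk $a*T_1$, which touches $S(T_2)$ only at the cone point. This is shorter, needs no cohomology, and gives more than existence. The identity ${\rm lk}_2(f(T_1),f(S(T_2)))={\rm lk}_2(f'(T_1),f'(T_2))$ is symmetric in $T_1,T_2$. Combined with your Step~1, the sum over $\Lambda^{1,2}(S(K_6))$ is therefore twice an odd number, which is precisely the ``$\equiv 2 \pmod 4$'' count mentioned after the theorem. Your argument also extends verbatim to $S(\sigma_{2n+1}^{n-1})\subset\mathbb{R}^{2n}$, using intrinsic linking of $\sigma_{2n+1}^{n-1}$ in $S^{2n-1}$.

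What your approach cannot reach is the minimal complex $M^{(2)}$. There the links of $a$ and $b$ are $K_5$, which has no pair of disjoint cycles. The relevant circles $\partial(c*\tau')$ pass through $c$, which is not adjacent to $a$ or $b$, so no cone disk is available. Handling $M^{(2)}$ is what the van Kampen argument buys.

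One small tidy-up in Step~3. Since $f'(T_1)$ may be knotted, do not use a ``disk pushed into the interior''. Instead:
\begin{enumerate}
\item take a mod~$2$ $2$-chain $F\subset\partial B$ with $\partial F=f'(T_1)$, transverse to $f'(T_2)$;
\item place it at one level of the cone structure $B\setminus\{f(a)\}\cong\partial B\times(0,1]$;
\item join it to $f'(T_1)$ by a collar.
\end{enumerate}
Its intersection points with the cone over $f'(T_2)$ are exactly those of $F\cap f'(T_2)$. This makes the identification with ${\rm lk}_2$ in $\partial B$ immediate.
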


In fact, their argument essentially shows that 
\begin{eqnarray*}
\sum_{\lambda \in \Lambda^{1,2}(S(K_6))} {\rm lk}_2(f(\lambda)) \equiv 2 \pmod{4},
\end{eqnarray*}
from which it follows that there exist at least two links with ${\rm lk}_2 = 1$.

In the following, we generalize Theorem \ref{ILR4} to higher dimensions. For any positive integer $n$, let $J$ be a simplicial $(n-1)$-complex, and let $a$, $b$, and $c$ be three vertices not belonging to $J$. Then, as a subcomplex of the simplicial $n$-complex $J * \{a,b,c\}$ (the join of $J$ and $\{a,b,c\}$), we define the simplicial $n$-complex $M_J^{(n)}$ by
\begin{eqnarray*}
M_{J}^{(n)} 
&=& \bigl( J * \{a,b,c\} \bigr) \setminus \{ c * s \mid s \in \varDelta^{n-1}(J) \} \\
&=& \bigl( J * \{a,b\} \bigr) \cup \bigl\{ c * t \mid t \in \varDelta^{n-2}(J) \bigr\}.
\end{eqnarray*}
In particular, when $J = \sigma_{2n}^{n-1}$, we denote $M_{\sigma_{2n}^{n-1}}^{(n)}$ by $M^{(n)}$. For $n \ge 2$, it is well known that $\sigma_{2n}^{n-1}$ is a minimal simplicial $(n-1)$-complex that cannot be embedded in $\mathbb{R}^{2n-2}$ \cite{VK33}, \cite{Flores32}. When $n=2$, $\sigma_4^1$ is isomorphic to $K_5$. In other words, $M^{(n)}$ is obtained by removing all $n$-simplices of the form $|c a_{i_0} \cdots a_{i_{n-1}}|$ ($0 \le i_0 < i_1 < \cdots < i_{n-1} \le 2n$) from $\sigma_{2n}^{n-1} * \{a,b,c\}$. Equivalently, it is obtained by adding all $(n-1)$-simplices of the form $|c a_{i_0} \cdots a_{i_{n-2}}|$ ($0 \le i_0 < i_1 < \cdots < i_{n-2} \le 2n$) to $\sigma_{2n}^{n-1} * \{a,b\}$. Note that $M^{(n)}$ can be regarded as a subcomplex of the suspension $S(\sigma_{2n+1}^{n-1})$ of $\sigma_{2n+1}^{n-1}$ by identifying $c$ with the additional vertex $a_{2n+1}$. Since $\sigma_{2n+1}^{n-1}$ can be embedded in $\mathbb{R}^{2n-1}$, its suspension $S(\sigma_{2n+1}^{n-1})$ can be embedded in $\mathbb{R}^{2n}$. Therefore, $M^{(n)}$ can also be embedded in $\mathbb{R}^{2n}$. Then we obtain the following.

\begin{Theorem}\label{ILR2n} 
Let $n$ be a positive integer. For every embedding $f \colon M^{(n)} \to \mathbb{R}^{2n}$, there exists an element $\lambda \in \Lambda^{n-1,n}(M^{(n)})$ such that ${\rm lk}_2(f(\lambda)) = 1$. 
\end{Theorem}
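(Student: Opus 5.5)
The plan is to prove the stronger parity statement
\begin{eqnarray*}
\omega(f) := \sum_{\lambda \in \Lambda^{n-1,n}(M^{(n)})} {\rm lk}_2(f(\lambda)) \equiv 1 \pmod 2
\end{eqnarray*}
for every embedding $f \colon M^{(n)} \to \mathbb{R}^{2n}$, from which Theorem \ref{ILR2n} follows at once.

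\textbf{Step 1: describe $\Lambda^{n-1,n}(M^{(n)})$.} Let $V=\{a_0,\ldots,a_{2n}\}$. An $n$-sphere in $M^{(n)}$ disjoint from some $(n-1)$-sphere should be a suspension $\partial\tau * \{a,b\}$, where $\tau$ is an $n$-simplex spanned by $n+1$ vertices of $V$. The complementary $(n-1)$-sphere should be $\partial(c*\sigma)$, where $\sigma$ is the $(n-1)$-simplex spanned by the remaining $n$ vertices. This sphere uses only the simplices $c*t$ with $t\in\varDelta^{n-2}(\sigma)$, together with $\sigma$ itself. So $\Lambda^{n-1,n}(M^{(n)})$ is in bijection with the partitions $V=V_\sigma\sqcup V_\tau$ with $|V_\sigma|=n$ and $|V_\tau|=n+1$. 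I would verify this by a short combinatorial check that no other pairs of disjoint spheres occur.

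\textbf{Step 2: compute $\omega$ for one embedding.} Take a generic embedding $h$ of $\sigma_{2n+1}^{n-1}$ (vertices $a_0,\ldots,a_{2n},c$) into $\mathbb{R}^{2n-1}$. Restrict its suspension to $M^{(n)}\subset S(\sigma_{2n+1}^{n-1})$. Since suspending one component preserves the $\mathbb{Z}_2$-linking number, we get ${\rm lk}_2(\partial(c*\sigma),S(\partial\tau))={\rm lk}_2(h(\partial(c*\sigma)),h(\partial\tau))$ in $\mathbb{R}^{2n-1}$. Every unordered partition of the $2n+2$ vertices of $\sigma_{2n+1}$ into two sets of size $n+1$ has exactly one part containing $c$. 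Hence $\omega$ equals the total $\mathbb{Z}_2$-linking of all pairs of disjoint boundaries of $n$-simplices in $\sigma_{2n+1}^{n-1}\subset\mathbb{R}^{2n-1}$. By the classical result (Conway--Gordon/Sachs for $n=2$, and Lov\'asz--Schrijver, Taniyama, Segal--Spie\.z in general; for $n=1$ a direct check on four points of $\mathbb{R}$), this total is odd.

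\textbf{Step 3: show $\omega$ is independent of $f$.} This is the main obstacle. Connect two embeddings by a generic homotopy. In dimension $2n=n+(n-1)+1$, the linking numbers change only at transverse crossings of an $n$-simplex $\alpha$ with a disjoint $(n-1)$-simplex $\beta$. Such a crossing changes ${\rm lk}_2(\lambda)$ exactly when $\alpha$ lies in the $n$-sphere of $\lambda$ and $\beta$ lies in its $(n-1)$-sphere, so I would count the $\lambda$ containing both.

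\emph{Case $\beta=c*t$.} If $\alpha=a*s$ (or $b*s$), then $n$ vertices of $V_\tau$ and $n-1$ vertices of $V_\sigma$ are fixed. The two leftover vertices can be assigned in $2$ ways, so the change is even.

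\emph{Case $\beta\in\varDelta^{n-1}(J)$.} If $\alpha=a*s$ with $s\cap\beta=\emptyset$, the completion is unique, so a single crossing changes $\omega$ by one. Here one must use that both ends are embeddings rather than arbitrary maps.

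To handle this case, I would group crossings by the complementary triple $(s,\beta,v)$, with $v$ the leftover vertex. I would then show that the total number of crossings of $f(\beta)$ with $f(a*s)\cup f(b*s)$, counted over the homotopy, is even. The idea is to express it as the change of an intersection or linking invariant of $f(\beta)$ with a disk bounded in the image, which is determined by the fixed embedded structure at both ends. If this resists, the fallback is to recast $\omega(f)$ directly as the evaluation of a van Kampen-type cocycle on a fixed cycle in the deleted product of $M^{(n)}$. Its independence of $f$ would then follow because $f$ is an embedding and the cycle is a $\mathbb{Z}_2$-boundary modulo the problematic terms. In either form, Steps 2 and 3 together give $\omega(f)\equiv 1$ for every embedding, which proves the theorem.
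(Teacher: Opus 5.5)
Your Steps 1 and 2 are sound. Your case analysis in Step 3 also correctly reduces everything to one claim: along a generic homotopy between two embeddings, the number of passages of an $(n-1)$-simplex $\beta\in\varDelta^{n-1}(\sigma_{2n}^{n-1})$ through an $n$-simplex $a*s$ or $b*s$ with $s\cap\beta=\varnothing$ is even. That claim is the whole content of the theorem, and the route you propose for it fails.

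The per-pair assertion, that for fixed $(s,\beta)$ the number of crossings of $\beta$ with $a*s\cup b*s$ is even, is false. Already for $n=1$, take a straight-line planar embedding of $M^{(1)}=K_{2,3}\sqcup\{c\}$. Move the vertex $a_0$ once around a small circle centred at $a$, keeping all edges straight. This is a homotopy from an embedding back to itself in which $a_0$ crosses $aa_1$ once and $aa_2$ once. So the pairs $(s,\beta)=(a_1,a_0)$ and $(a_2,a_0)$ both get odd counts; their effects on $\omega$ cancel only because both change the same $\lambda$. There is no invariant of $f(\beta)$ relative to the disk $f(a*s\cup b*s)$ to appeal to, since neither is a cycle.

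The fallback is too vague to assess as stated. Note that the van Kampen cocycle of an embedding of $M^{(n)}$ vanishes identically, so any cocycle computing $\omega(f)$ has to come from somewhere else.

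The missing idea is a quantity that compensates exactly these crossings. The paper obtains it by extending $f$ over the deleted simplices $c*\sigma$ to a generic immersion $\hat f$ of $K=\sigma_{2n}^{n-1}*\{a,b,c\}$. Then $\omega(f)$ equals the total number of double points of $\hat f$ mod $2$. That total is odd for every generic immersion of $K$, because each elementary cochain represents the generator of $H^{2n}(\widetilde{K}^*;\mathbb{Z}_2)\cong\mathbb{Z}_2$ (Lemma \ref{cohomo}) and $o_K\neq 0$.

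You can graft this onto your homotopy argument directly. For a generic map $g$ of $M^{(n)}$, let $d_{ab}(g)$ be the number of double points between $g(a*s)$ and $g(b*s')$ with $s\cap s'=\varnothing$. Then:
\begin{enumerate}
\item When $\beta\in\varDelta^{n-1}(\sigma_{2n}^{n-1})$ passes through $a*s$ (resp.\ $b*s$), a double point between $b*\beta$ (resp.\ $a*\beta$) and that simplex is born or dies. So $d_{ab}$ toggles exactly when $\omega$ does.
\item A passage of $a*t$ or $b*t$ through an $n$-simplex changes $d_{ab}$ by an even number: two simplices $x*(t\cup\{v\})$ avoid the crossed simplex.
\item The simplex $c*t$ lies in no $n$-simplex of $M^{(n)}$, so its passages do not affect $d_{ab}$.
\item Tangency births and deaths change $d_{ab}$ by $2$.
\end{enumerate}
Hence $\omega+d_{ab}$ is constant mod $2$ along the homotopy, and $d_{ab}=0$ at both embedded ends. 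With this in place of your grouping argument, your Steps 2 and 3 give a complete proof. It replaces the paper's appeal to the nonvanishing of $o_K$ for the join by the classical parity theorem for $\sigma_{2n+1}^{n-1}\subset\mathbb{R}^{2n-1}$.
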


Note that $K_6$ is isomorphic to $\sigma_5^1$ and that $S(\sigma_5^1)$ contains $M^{(2)}$ as a proper subcomplex. In general, for any positive integer $n$, the suspension $S(\sigma_{2n+1}^{n-1})$ contains exactly $2n+2$ subcomplexes isomorphic to $M^{(n)}$, and each element of $\Lambda^{n-1,n}(S(\sigma_{2n+1}^{n-1}))$ is shared by exactly $n+1$ of these subcomplexes. Thus, for any embedding of $S(\sigma_{2n+1}^{n-1})$ into $\mathbb{R}^{2n}$, Theorem \ref{ILR2n} implies that there exist at least $(2n+2)/(n+1) = 2$ two-component links with ${\rm lk}_2 = 1$. We will prove Theorem \ref{ILR2n} in the next section. While the method in \cite{HRJV26} is based on the original method of Conway--Gordon \cite{CG83}, our proof of Theorem \ref{ILR2n} follows an approach similar to that in \cite{N26b} and is essentially based on the \textit{van Kampen obstruction} \cite{VK33}.

By Theorem \ref{ILR2n}, $S(\sigma_{2n+1}^{n-1})$ is not minimal with respect to this intrinsic linking property. On the other hand, for every positive integer $n$, $M^{(n)}$ is minimal in the following sense.

\begin{Proposition}\label{ILRn2min}
Let $N$ be a proper subcomplex of $M^{(n)}$. Then there exists an embedding $f \colon N \to \mathbb{R}^{2n}$ such that ${\rm lk}_2(f(\lambda)) = 0$ for every element $\lambda \in \Lambda^{n-1,n}(N)$.
\end{Proposition}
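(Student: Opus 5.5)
It suffices to treat maximal proper subcomplexes. Any proper subcomplex $N$ is contained in $M^{(n)}$ minus the open star of a single maximal simplex. Moreover, an embedding of a larger complex restricts to an embedding of $N$, and $\Lambda^{n-1,n}(N)\subset\Lambda^{n-1,n}(N')$ whenever $N\subset N'$. So we may assume $N$ is obtained by deleting one maximal simplex $\tau$ of $M^{(n)}$. Up to the symmetric group on $a_0,\dots,a_{2n}$ and the swap $a\leftrightarrow b$, there are only three types of $\tau$:
\begin{enumerate}
\item $\tau=a*s$ with $s\in\varDelta^{n-1}(\sigma_{2n}^{n-1})$;
\item $\tau=c*t$ with $t\in\varDelta^{n-2}(\sigma_{2n}^{n-1})$, a maximal $(n-1)$-simplex;
\item the deletion of vertex-level pieces, which is subsumed by the first two cases.
\end{enumerate}
For each type we will write down an explicit embedding $f\colon N\to\mathbb{R}^{2n}$ and verify that all the $\mathbb{Z}_2$-linking numbers vanish.

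The construction starts from the standard picture used to embed $M^{(n)}$. We embed $\sigma_{2n+1}^{n-1}$ in $\mathbb{R}^{2n-1}$ with $c=a_{2n+1}$, using a general-position (e.g.\ cyclic-polytope / moment-curve) embedding. We then cone it to $a$ and $b$ on opposite sides of $\mathbb{R}^{2n-1}\times\{0\}\subset\mathbb{R}^{2n}$. In this embedding, the link $\gamma\sqcup\delta$ with $\gamma\subset J=\sigma_{2n+1}^{n-1}$ and $\delta=\{a,b\}*\partial\sigma$-type suspension has
\[
{\rm lk}_2(f(\gamma),f(\delta))={\rm lk}_2\bigl(\gamma,\ \text{an }(n-1)\text{-sphere in }\mathbb{R}^{2n-1}\bigr)
\]
computed inside $\mathbb{R}^{2n-1}$. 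The first step is therefore to classify $\Lambda^{n-1,n}(M^{(n)})$. An $n$-sphere must be a suspension $\{a,b\}*S$ with $S$ an $(n-1)$-sphere in $J$, or contain $c$ only in a restricted way. The disjoint $(n-1)$-sphere is then $\partial$ of an $n$-simplex on the remaining vertices, possibly through $c$. Hence the linking numbers reduce to the van Kampen-type intersection numbers $f(\partial\sigma)\cdot f(\partial\sigma')$ of disjoint $(n-1)$-spheres in the image of $\sigma_{2n+1}^{n-1}$ in $\mathbb{R}^{2n-1}$.

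For case (2), deleting $c*t$ destroys every sphere through that simplex. We then modify the embedding by a finger move: push a small $(n-1)$-disk of one $f(\partial\sigma)$ across $f(t)$'s position so as to change, by $1$, the intersection number with exactly those disjoint spheres. By the Conway--Gordon/van Kampen parity argument, the sum of linking numbers over the relevant pairs is odd, and this move changes exactly the pairs involving the deleted simplex. So one can choose the base embedding whose unique odd pair passes through $c*t$; the cyclic-polytope embedding realizing exactly one linked pair of $(n-1)$-spheres among $2n+2$ points, namely Gale/Flores parity, is the natural candidate. Case (1) is analogous. Remove $a*s$. Embed the $a$-cone over $J\setminus s$ and cone $s$ only to $b$. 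We then re-embed so that the $n$-sphere $\{a,b\}*S$ needed for the odd pair must use $a*s$; placing the odd pair of the base embedding so that $s\subset S$ kills every remaining linking number.

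The main obstacle is the bookkeeping in step one: showing that the base embedding of $\sigma_{2n+1}^{n-1}$ can be chosen with exactly one pair of disjoint boundary spheres having odd intersection number, positioned through a prescribed simplex. It must also be checked that no other elements of $\Lambda^{n-1,n}(N)$, for instance spheres through $c$ not of suspension form, acquire nonzero linking. For the first point, I would use the explicit linear embedding in which $2n+2$ points lie on the moment curve. There, $\partial\sigma\cap\partial\sigma'$ is nonzero only for the interlacing partition, and symmetry of the cyclic order lets us place any chosen simplex in it.
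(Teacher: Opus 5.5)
Your proposal is correct and is essentially the paper's own argument. You reduce to deleting one maximal simplex ($a*s$ or $c*t$ up to symmetry), take an embedding of $\sigma_{2n+1}^{n-1}$ (with $c=a_{2n+1}$) in $\mathbb{R}^{2n-1}$ having exactly one pair of disjoint boundary spheres with ${\rm lk}_2=1$, and label it so this pair passes through the deleted simplex. Suspending the non-$c$ part to $a,b$ then leaves no link with ${\rm lk}_2=1$. The paper cites Taniyama for the one-linked-pair embedding, while you use the moment curve, whose unique linked pair is the interlacing one; that is a valid explicit substitute.

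The finger-move paragraph should be dropped, because as written it does not describe a meaningful move. For instance, an $(n-1)$-disk generically cannot be pushed across the $(n-2)$-simplex $f(t)$ in $\mathbb{R}^{2n-1}$. Simply choosing which vertex goes to which point on the moment curve already places the unique linked pair where you need it, exactly as the paper's relabelling step does.
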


Note that $M^{(1)}$ is isomorphic to $N_{2}^{(1)} = K_1 \sqcup K_{2,3}$. On the other hand, the simplicial $n$-complexes $N_{1}^{(n)}$ and $N_3^{(n)}$ each have $2n+3$ vertices, while $N_2^{(n)}$ has $3n+3$ vertices \cite{N26b}. Since $M^{(n)}$ has $2n+4$ vertices, $M^{(n)}$ is not isomorphic to any of the simplicial $n$-complexes $N_{i}^{(n)}$ ($i=1,2,3$) for $n \ge 2$. We will also give a proof of Proposition \ref{ILRn2min} in the next section.

\begin{Remark}\label{k33}
Let $[3]^{*m}$ denote the $m$-fold join of three points. Note that $[3]^{*m}$ can naturally be regarded as a simplicial $(m-1)$-complex. For $n \ge 2$, it is also well known that $[3]^{*n}$ is a minimal simplicial complex that cannot be embedded in $\mathbb{R}^{2n-2}$ \cite{VK33}, \cite{Flores32}. In particular, $[3]^{*2}$ is isomorphic to $K_{3,3}$. In this case, $M_{[3]^{*m}}^{(n)}$ is isomorphic to $N_2^{(n)}$, and the same conclusion as in Theorem~\ref{ILR2n} holds. 
\end{Remark}

\section{The mod $2$ van Kampen obstruction and proof of Theorem \ref{ILR2n}}

Let $K$ be a simplicial $n$-complex. It is well known that every such $K$ admits a generic immersion into $\mathbb{R}^{2n}$. Here, an immersion $\varphi \colon K \to \mathbb{R}^{2n}$ is said to be \textit{generic} if all singularities of $\varphi(K)$ are transversal double points occurring between the interiors of pairs of disjoint $n$-simplices. For a generic immersion $\varphi$ of $K$ into $\mathbb{R}^{2n}$ and any two mutually disjoint $n$-simplices $s, s' \in \varDelta^n(K)$, let $l(\varphi(s),\varphi(s'))$ denote the number of double points between $\varphi(s)$ and $\varphi(s')$. Denote by $K^*$ the \emph{deleted product} of $K$, that is, the subcomplex of $K \times K$ consisting of all cells $s \times s'$ with $s \cap s' = \emptyset$. Let $\widetilde{K}^*$ be the quotient complex of $K^*$ obtained by identifying $s \times s'$ with $s' \times s$. Given a generic immersion $\varphi \colon K \to \mathbb{R}^{2n}$, the modulo-$2$ reduction of $l(\varphi(s),\varphi(s'))$ defines a $\mathbb{Z}_2$-cocycle $c_{\varphi}$ on $\widetilde{K}^*$, and its cohomology class $o_K = [c_{\varphi}] \in H^{2n}(\widetilde{K}^*;\mathbb{Z}_2)$ is called the \emph{mod $2$ van Kampen obstruction} of $K$.

Let $n$ be a positive integer, and let $a$, $b$, and $c$ be three vertices not belonging to $\sigma_{2n}^{n-1}$. In the following, we regard $\sigma_{2n}^{n-1} * \{a,b,c\}$ as a simplicial $n$-complex $K$.

\begin{Lemma}\label{cohomo}
Let $K = \sigma_{2n}^{n-1} * \{a,b,c\}$. Then 
\[
H^{2n}(\widetilde{K}^*;\mathbb{Z}_2) \cong \mathbb{Z}_2.
\]
Moreover, this group is generated by the cohomology class $[c]$ of the cocycle $c$ defined as follows. Fix any two disjoint $(n-1)$-simplices $\tau, \tau' \in \varDelta^{n-1}(\sigma_{2n}^{n-1})$, and let $\sigma = a*\tau$, $\sigma' = b*\tau'$. Define $c \in C^{2n}(\widetilde{K}^*;\mathbb{Z}_2)$ to be the dual cochain of the $2n$-cell $\sigma \times \sigma'$; that is,
\[
c(s \times s') = 
\begin{cases}
1 & \text{if } s \times s' = \sigma \times \sigma' \text{ in } \widetilde{K}^*, \\
0 & \text{otherwise}.
\end{cases}
\]
\end{Lemma}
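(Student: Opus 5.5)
The plan is to compute $H^{2n}(\widetilde{K}^*;\mathbb{Z}_2)$ directly from cellular cochains. Since $K$ is an $n$-complex, $\widetilde{K}^*$ is a $2n$-dimensional cell complex, so there are no $(2n+1)$-cells. Hence
\[
H^{2n}(\widetilde{K}^*;\mathbb{Z}_2)=C^{2n}(\widetilde{K}^*;\mathbb{Z}_2)/\delta C^{2n-1}(\widetilde{K}^*;\mathbb{Z}_2).
\]
The $2n$-cells are unordered pairs $\{s,s'\}$ of disjoint $n$-simplices. Every $n$-simplex of $K$ has the form $x*\tau$ with $x\in\{a,b,c\}$ and $\tau\in\varDelta^{n-1}(\sigma_{2n}^{n-1})$. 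The $(2n-1)$-cells are pairs $\{t,s'\}$, where $t$ is an $(n-1)$-simplex disjoint from the $n$-simplex $s'$. The coboundary of the dual cochain of $\{t,s'\}$ is the sum of the duals of the cells $\{s,s'\}$ with $s\supset t$ an $n$-simplex disjoint from $s'$.

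\textbf{Step 1: each elementary coboundary has exactly two terms.} Write $s'=y*\tau'$. There are two kinds of $(n-1)$-simplices $t$.
\begin{enumerate}
\item If $t=\tau\in\varDelta^{n-1}(\sigma_{2n}^{n-1})$, its $n$-dimensional cofaces are $x*\tau$ with $x\in\{a,b,c\}$. Disjointness from $s'$ forces $x\neq y$ and $\tau\cap\tau'=\emptyset$, which leaves exactly $2$ choices of $x$.
\item If $t=x*\rho$ with $\rho$ an $(n-2)$-simplex, its cofaces are $x*\rho*a_i$ with $a_i\notin\rho$. Disjointness requires $x\neq y$ and $a_i\notin\rho\cup\tau'$. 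Since $|\rho\cup\tau'|=(n-1)+n$, this leaves $(2n+1)-(2n-1)=2$ choices of $a_i$.
\end{enumerate}
Consequently every generator of $\delta C^{2n-1}$ is a sum of exactly two dual $2n$-cochains (or is $0$).

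\textbf{Step 2: $[c]\neq 0$.} By Step 1, the sum $Z$ of all $2n$-cells of $\widetilde{K}^*$ is a $\mathbb{Z}_2$-cycle. Equivalently, the functional $\varepsilon\colon C^{2n}\to\mathbb{Z}_2$ that counts the cells in the support mod $2$ vanishes on $\delta C^{2n-1}$. Since $\varepsilon(c)=1$, the class $[c]$ is nonzero. The same holds for the dual of any single $2n$-cell.

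\textbf{Step 3: all dual cochains are cohomologous.} Form a graph whose vertices are the $2n$-cells, with an edge for each two-term coboundary from Step 1. If this graph is connected, then every dual cochain is cohomologous to $c$. Since the duals span $C^{2n}$, it would follow that $H^{2n}\cong\mathbb{Z}_2$, generated by $[c]$.

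The edges of type (1) replace the apex $x$ of $x*\tau$ by the third letter of $\{a,b,c\}$ distinct from $x$ and $y$. On ordered pairs of distinct apexes $(x,y)$, these moves generate a $6$-cycle, so all apex configurations are connected. The edges of type (2) keep the apexes and replace one vertex of $\tau$ by a vertex outside $\tau\cup\tau'$. For fixed $\tau'$, this exchange connects all $n$-subsets of the $(n+1)$-element complement of $\tau'$; the same holds for $\tau'$ with $\tau$ fixed.

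It remains to show that these alternating single-vertex exchanges connect any two disjoint pairs $(\tau,\tau')$ of $n$-subsets of the $2n+1$ vertices. I expect this to be the main (though elementary) obstacle. I would handle it by induction on $|\tau\triangle\tau_0|+|\tau'\triangle\tau'_0|$. At each step, move a vertex of the target $\tau_0$ into $\tau$. If that vertex is currently occupied by $\tau'$, first move it out of $\tau'$ to the free vertex (there is always exactly one vertex outside $\tau\cup\tau'$), then bring it into $\tau$. One must also check that this preserves disjointness and eventually reaches $(\tau_0,\tau'_0)$. Finally, the unordered identification $s\times s'=s'\times s$ only merges vertices of the graph, so it does not affect connectivity.
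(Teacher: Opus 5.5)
Your proposal is correct. Its Step~3 follows the same route as the paper. The paper's first relation, from $V^{t,s'}$ with $t=|a a_0\cdots a_{n-2}|$, is your type~(2) vertex exchange, and its second, from $t'=|a_0\cdots a_{n-1}|$, is your type~(1) apex change. Where the paper simply appeals to ``the symmetry of $K$,'' you make the connectivity on disjoint pairs $(\tau,\tau')$ explicit, and your induction handles it correctly: first bring $\tau$ to $\tau_0$, routing a target vertex through the unique free vertex when $\tau'$ occupies it, then adjust $\tau'$ inside the $(n+1)$-element complement of $\tau_0$.

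The real difference is your Step~2. The paper goes directly from ``all dual cochains are cohomologous'' to $H^{2n}(\widetilde{K}^*;\mathbb{Z}_2)\cong\mathbb{Z}_2$. By itself that only shows the group is generated by one class, so it is $0$ or $\mathbb{Z}_2$. In the paper, non-triviality is available only implicitly, through the external fact $o_K\neq 0$ invoked in the proof of Theorem~\ref{vko}. Your Step~1 count shows that every $(2n-1)$-cell is a face of exactly two $2n$-cells, so the parity functional $\varepsilon$ vanishes on $\delta C^{2n-1}$. This makes the lemma self-contained and shows that $\varepsilon$ induces an isomorphism $H^{2n}(\widetilde{K}^*;\mathbb{Z}_2)\to\mathbb{Z}_2$.

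A minor point: the parenthetical ``or is $0$'' in Step~1 never occurs, as your own count shows.
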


\begin{proof}
For a pair of disjoint $n$-simplices $s,s'$ in $K$, let $E^{s,s'}$ denote the dual cochain of the $2n$-cell $s \times s'$ in $\widetilde{K}^*$. Similarly, for an $(n-1)$-simplex $t$ and an $n$-simplex $s'$ in $K$ with $t \cap s' = \emptyset$, let $V^{t,s'}$ denote the dual cochain of the $(2n-1)$-cell $t \times s'$ in $\widetilde{K}^*$.

First, fix an $(n-1)$-simplex $t = |a a_0 \cdots a_{n-2}|$ and an $n$-simplex $s' = |b a_{n+1} \cdots a_{2n}|$ in $K$. Note that $t$ and $s'$ are disjoint. Let 
\[
s_1 = |a a_0 \cdots a_{n-2} a_{n-1}|, \quad s_2 = |a a_0 \cdots a_{n-2} a_n|
\]
be two $n$-simplices containing $t$ as a face. Then the $(2n-1)$-coboundary satisfies
\[
\delta^{2n-1}(V^{t,s'}) = E^{s_1,s'} + E^{s_2,s'}.
\]
Hence $E^{s_1,s'}$ and $E^{s_2,s'}$ are cohomologous in $H^{2n}(\widetilde{K}^*;\mathbb{Z}_2)$. By the symmetry of $K$, this relation implies that for any pair of disjoint $n$-simplices $\sigma = a*\tau$ and $\sigma' = b*\tau'$ (where $\tau,\tau'\in\varDelta^{n-1}(\sigma_{2n}^{n-1})$ are disjoint), all such $E^{\sigma,\sigma'}$ are cohomologous to each other.

Next, fix an $(n-1)$-simplex $t' = |a_0 a_1 \cdots a_{n-1}|$ and an $n$-simplex $s' = |b a_{n+1} \cdots a_{2n}|$ in $K$. Let 
\[
s_1' = |a a_0 \cdots a_{n-1}|, \quad s_2' = |c a_0 \cdots a_{n-1}|
\]
be two $n$-simplices containing $t'$ as a face. Then, similarly,
\[
\delta^{2n-1}(V^{t',s'}) = E^{s_1',s'} + E^{s_2',s'}.
\]
By the symmetry of $K$, all $E^{\sigma,\sigma'}$ with $\sigma = a*\tau$ and $\sigma' = c*\tau'$ (where $\tau,\tau'\in\varDelta^{n-1}(\sigma_{2n}^{n-1})$ are disjoint) are also cohomologous to each other. The same type of relation holds for pairs of the form $\sigma = b*\tau$ and $\sigma' = c*\tau'$.

Combining these relations, we conclude that all dual cochains $E^{\sigma,\sigma'}$ corresponding to pairs of disjoint $n$-simplices in $K$ are cohomologous to each other. Therefore,
\[
H^{2n}(\widetilde{K}^*;\mathbb{Z}_2) \cong \mathbb{Z}_2,
\]
and this group is generated by the cohomology class $[c]$ of the dual cochain of any single $2n$-cell $\sigma \times \sigma'$, where $\sigma = a*\tau$ and $\sigma' = b*\tau'$ for disjoint $(n-1)$-simplices $\tau,\tau'\in\varDelta^{n-1}(\sigma_{2n}^{n-1})$. 
\end{proof}

\begin{Theorem}\label{vko}
For every generic immersion $\varphi$ of $K = \sigma_{2n}^{n-1}*\{a,b,c\}$ into $\mathbb{R}^{2n}$, the following holds:
\begin{eqnarray*}
\sum_{\substack{s,s'\in\varDelta^{n}(K)\\ s\cap s'=\varnothing}} l(\varphi(s),\varphi(s')) \equiv 1 \pmod{2}.
\end{eqnarray*}
\end{Theorem}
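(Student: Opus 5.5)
Since the sum in question is, modulo $2$, the evaluation of the van Kampen cocycle $c_\varphi$ on the sum of all $2n$-cells of $\widetilde{K}^*$, the plan is to reduce the statement to two facts. First, the quantity $S(\varphi)=\sum l(\varphi(s),\varphi(s')) \bmod 2$ does not depend on the generic immersion $\varphi$. Second, it equals $1$ for one explicit immersion.

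For independence, I would use Lemma \ref{cohomo}. For any two generic immersions $\varphi,\psi$, the cocycles $c_\varphi$ and $c_\psi$ are cohomologous; this is the standard fact that $o_K$ is well defined. So $c_\varphi-c_\psi=\delta^{2n-1}d$ for some $(2n-1)$-cochain $d$. It suffices to show that the chain $Z=\sum_{s\cap s'=\varnothing} s\times s'$ (each unordered pair counted once) is a $\mathbb{Z}_2$-cycle in $\widetilde{K}^*$, since then $c_\varphi(Z)-c_\psi(Z)=d(\partial Z)=0$. To check this, take a $(2n-1)$-cell $t\times s'$ with $t\in\varDelta^{n-1}(K)$, $s'\in\varDelta^n(K)$ and $t\cap s'=\varnothing$. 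Its coefficient in $\partial Z$ is the number of $n$-simplices $s\supset t$ with $s\cap s'=\varnothing$, and I claim this number is always even. There are two cases.
\begin{enumerate}
\item If $t=v*t_0$ with $v\in\{a,b,c\}$ and $t_0$ an $(n-2)$-simplex of $\sigma_{2n}^{n-1}$, then the admissible $s$ are $v*t_0*\{x\}$, where $x$ is a vertex of $\sigma_{2n}$ outside $t_0\cup s'$. Since $s'$ contains $n$ vertices of $\sigma_{2n}$, the number of such $x$ is $(2n+1)-(n-1)-n=2$.
\item If $t\subset\sigma_{2n}$, then the admissible $s$ are $v*t$ with $v\in\{a,b,c\}$ and $v\notin s'$. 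Exactly one of $a,b,c$ lies in $s'$, so there are $2$ choices.
\end{enumerate}
Hence $\partial Z=0$ mod $2$. Equivalently, Lemma \ref{cohomo} says every $2n$-cell dual cochain represents the generator, and $Z$ pairs with it.

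It remains to compute $S(\varphi)$ for one convenient immersion; I expect this to be the main obstacle. I would take a general-position embedding $g$ of $\sigma_{2n}^{n-1}$ into $\mathbb{R}^{2n-1}\times\{0\}$ for which the classical van Kampen--Flores count holds: the number of intersecting pairs of disjoint $(n-1)$-simplices is odd, which is the standard fact that $\sigma_{2n}^{n-1}$ does not embed in $\mathbb{R}^{2n-2}$. I would realize this by an immersion into $\mathbb{R}^{2n-2}\subset\mathbb{R}^{2n}$ and then cone to $a,b,c$ placed in generic positions in $\mathbb{R}^{2n}$. Double points of $n$-simplices $u*\tau$ and $v*\tau'$ then arise only for $u\neq v$ (for instance $a$ on one side and $b$ on the other of a hyperplane-like slice), and they should correspond to intersections of $\tau$ and $\tau'$ in the base. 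The cleanest version is to put $\sigma_{2n}^{n-1}$ generically in $\mathbb{R}^{2n-2}\times\{0\}$ and cone with apices $a,b,c$ chosen so that each pair of cones $u*\tau$, $v*\tau'$ meets in exactly the number of base intersections of $\tau$ and $\tau'$ mod $2$, while cones sharing a vertex class meet only in lower dimension.

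Then $S(\varphi)$ is the number of ordered pairs $(u,v)$ of distinct apices times the van Kampen--Flores count. Because the pairs are unordered, I would count unordered pairs $\{u*\tau,v*\tau'\}$: the pairs $\{u,v\}$ can be chosen in $3$ ways, and $\{\tau,\tau'\}$ is either ordered once or in both orders, giving $3\times(\text{odd})\times(1\text{ or }2)$. Getting this factor right is the delicate bookkeeping: I must check whether a base crossing of $\tau,\tau'$ produces a crossing for $(u,v)$ only in one orientation. With apices in general position, I would try to arrange that exactly one of the two orders crosses, giving $3\cdot\text{odd}\equiv 1$. If this explicit geometry becomes unwieldy, my fallback is a Conway--Gordon-style reduction: deleting the vertex $c$ links $S(\varphi)$ to the known odd invariant of the suspension $\sigma_{2n}^{n-1}*\{a,b\}$, which is the van Kampen--Flores type complex used in \cite{N26b}.
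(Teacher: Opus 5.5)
\textbf{Step 1 is fine; Step 2, which carries the whole content of the theorem, is not carried out.}

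Your first step is sound. The two-case count shows that $Z$ is a mod $2$ cycle, so $S(\varphi)=\langle o_K,[Z]\rangle$ is independent of $\varphi$; this does the job that Lemma~\ref{cohomo} does in the paper.

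The gap is the second step: you must show $S(\varphi_0)=1$ for some $\varphi_0$, i.e.\ $o_K\neq 0$. The paper gets this from the join theorem of \cite{Pa20} ($o_J\neq 0\Rightarrow o_{J*\{a,b,c\}}\neq 0$) applied to $J=\sigma_{2n}^{n-1}$. You instead propose an explicit immersion, but the computation is not done and the sketch is inconsistent.
\begin{enumerate}
\item Coning a generic map $\sigma_{2n}^{n-1}\to\mathbb{R}^{2n-2}$ to $a,b,c$ does not give a generic immersion. Each base double point of $\tau,\tau'$ lies on the boundary faces of $u*\tau$ and $v*\tau'$, not in their interiors. Moreover, the non-disjoint simplices $u*\tau$ and $u*\tau'$ overlap along the segment from $u$ to that point.
\item So a perturbation is required, and $S$ is determined entirely by how each base double point is resolved. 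You leave exactly this open (``I would try to arrange'').
\item Your rule that every pair $u*\tau$, $v*\tau'$ inherits the base count gives, by your own next sentence, $6\cdot(\mathrm{odd})\equiv 0$, which contradicts the statement. The corrected count $3\cdot(\mathrm{odd})\cdot(1\text{ or }2)$ is left undecided.
\end{enumerate}

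\textbf{What would close the gap.} You need a local computation along the following lines.
\begin{enumerate}
\item Put the apices at points $p_a,p_b,p_c$ of the complementary $\mathbb{R}^2$, no two parallel.
\item Resolve a base double point $y\in\tau$, $y'\in\tau'$ by lifting $y,y'$ to small heights $e,e'\in\mathbb{R}^2$. For $u\neq v$, the simplices $u*\tau$ and $v*\tau'$ then acquire exactly one transverse double point nearby if and only if $e'-e$ lies in the open cone spanned by $p_u$ and $-p_v$.
\item Summing over the six ordered pairs $(u,v)$ gives $\binom{k}{2}+\binom{3-k}{2}$, where $k$ is the number of $p_u$ on one side of the line $\mathbb{R}(e'-e)$. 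This is always odd, and equals $1$ when $p_a+p_b+p_c=0$.
\item Check that no other double points appear. For instance, along the segments from an apex to a base double point this requires $e'-e$ not parallel to any $p_u$.
\end{enumerate}
Then $S\equiv\#\{\text{base double points}\}\equiv 1$ by van Kampen--Flores. This is essentially the proof of the join lemma, and it is the missing idea.

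\textbf{The fallback does not work as stated.} The suspension $\sigma_{2n}^{n-1}*\{a,b\}$ embeds in $\mathbb{R}^{2n}$, since it lies in $S(\sigma_{2n+1}^{n-1})$. Hence no double-point count over its disjoint pairs of $n$-simplices can be forced to be odd. Moreover, the analogue of $Z$ for it is not a cycle, because in your case~(2) only one apex is admissible.
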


\begin{proof}
Let $J$ be a simplicial $(n-1)$-complex and let $a$, $b$, $c$ be three vertices not belonging to $J$. It is known that if the mod $2$ van Kampen obstruction $o_J$ of $J$ is non-zero, then the obstruction $o_{J*\{a,b,c\}}$ of the join $J*\{a,b,c\}$ is also non-zero \cite{Pa20}. Since the mod $2$ van Kampen obstruction $o_{\sigma_{2n}^{n-1}}$ is known to be non-zero, it follows that the obstruction $o_K$ of $K=\sigma_{2n}^{n-1}*\{a,b,c\}$ is also non-zero. 

On the other hand, by Lemma \ref{cohomo}, we have $H^{2n}(\widetilde{K}^*;\mathbb{Z}_2)\cong\mathbb{Z}_2$ and $o_K$ coincides with its unique non-trivial generator. Since all $2n$-cells of $\widetilde{K}^*$ are cohomologous, for every generic immersion $\varphi \colon K \to \mathbb{R}^{2n}$, we have
\begin{eqnarray*}
o_K = \bigg( \sum_{\substack{s,s'\in\varDelta^{n}(K)\\ s\cap s'=\emptyset}} l(\varphi(s),\varphi(s')) \bigg) \cdot [E^{\sigma,\sigma'}] \in H^{2n}(\widetilde{K}^*;\mathbb{Z}_2),
\end{eqnarray*}
where $\sigma = a*\tau$ and $\sigma' = b*\tau'$ for some disjoint $(n-1)$-simplices $\tau,\tau'$. Since $o_K$ is non-zero, the coefficient must be $1$ in $\mathbb{Z}_2$. This implies that the total number of double points is odd.
\end{proof}

\begin{proof}[Proof of Theorem \ref{ILR2n}]
For each $n$-simplex $\tau$ of $\sigma_{2n}$, the join $\partial\tau * \{a,b\}$ is homeomorphic to the $n$-sphere. Let $\Gamma^n$ be the collection of all such subcomplexes contained in $M^{(n)}$. Next, for each $(n-1)$-simplex $\tau'$ of $\sigma_{2n}^{n-1}$, we construct a subcomplex by taking $\tau'$ itself and attaching $\tau'' * c$ for every $(n-2)$-simplex $\tau''$ of $\tau'$ (when $n\ge 2$). For $n=1$, we take the subcomplex consisting of the two points $\{c, \tau'\}$. This subcomplex is homeomorphic to the $(n-1)$-sphere, as it coincides with the boundary of the $n$-simplex $\tau' * c$. Let $\Gamma^{n-1}$ be the collection of all such subcomplexes contained in $M^{(n)}$. Then every $\lambda \in \Lambda^{n-1,n}(M^{(n)})$ can be expressed as the disjoint union of an element $\gamma \in \Gamma^{n-1}$ and an element $\delta \in \Gamma^n$, see Fig.~\ref{M_2_link} for the case $n=2$.

Let $f \colon M^{(n)} \to \mathbb{R}^{2n}$ be an embedding. This can be extended to a generic immersion $\hat{f}$ of the complex $K = \sigma_{2n}^{n-1} * \{a,b,c\}$ into $\mathbb{R}^{2n}$ such that, for any two $n$-simplices $s,s' \in \varDelta^n(K)$ with $c \in s$ and $c \notin s'$, the images $\hat{f}(s)$ and $\hat{f}(s')$ intersect transversely at finitely many double points, all of which occur in the interiors of both simplices. 

Note that for any $\lambda = \gamma \sqcup \delta \in \Lambda^{n-1,n}(M^{(n)})$ with $\gamma = \partial s$, $s \in \varDelta^n(K)$ and $c \in s$, we have
\begin{eqnarray}\label{lkdef}
{\rm lk}_{2}(f(\gamma),f(\delta))
\equiv \sum_{s'\in \varDelta^{n}(\delta)} l(\hat{f}(s),\hat{f}(s')) \pmod{2}.
\end{eqnarray}
Note also that for any $\lambda = \partial s \sqcup \delta \in \Lambda^{n-1,n}(M^{(n)})$, each $n$-simplex $s' \in \varDelta^n(\delta)$ determines a unique unordered pair $\{s,s'\}$ with $s \cap s' = \emptyset$, and conversely, every such unordered pair $\{s,s'\} \subset \varDelta^n(K)$ with $c \in s$ and $c \notin s'$ arises from exactly one $\lambda$. 

Then, by \eqref{lkdef} and Theorem \ref{vko}, we obtain
\begin{eqnarray*}
\sum_{\lambda \in \Lambda^{n-1,n}(M^{(n)})} {\rm lk}_{2}(f(\lambda))
&=& \sum_{\substack{\gamma \sqcup \delta \in \Lambda^{n-1,n}(M^{(n)}) \\ \gamma \in \Gamma^{n-1},\ \delta \in \Gamma^n}} {\rm lk}_{2}(f(\gamma),f(\delta)) \\
&\equiv& \sum_{\substack{\gamma \sqcup \delta \in \Lambda^{n-1,n}(M^{(n)}) \\ \gamma = \partial s,\ s \in \varDelta^{n}(K),\ c \in s}}
\left( \sum_{s' \in \varDelta^{n}(\delta)} l(\hat{f}(s),\hat{f}(s')) \right) \\
&=& \sum_{\substack{s,\,s' \in \varDelta^{n}(K) \\ s \cap s' = \varnothing}} l(\hat{f}(s),\hat{f}(s')) \\
&\equiv& 1 \pmod{2}.
\end{eqnarray*}
Thus, there exists some $\lambda \in \Lambda^{n-1,n}(M^{(n)})$ such that ${\rm lk}_{2}(f(\lambda)) = 1$. 
\end{proof}

\begin{figure}[htbp]
\begin{center}
\scalebox{0.6}{\includegraphics*{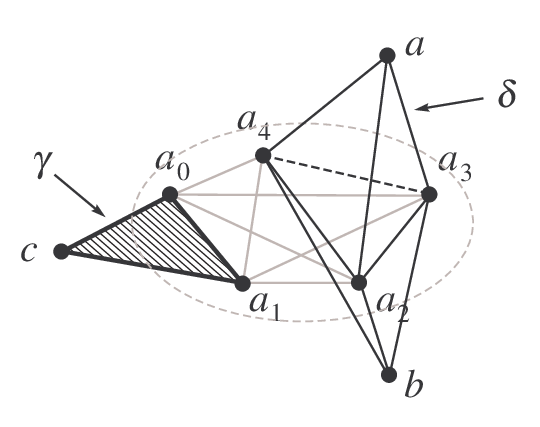}}
\caption{$\lambda=\gamma\sqcup \delta\in \Lambda^{1,2}(M^{(2)})$}
\label{M_2_link}
\end{center}
\end{figure}

\begin{proof}[Proof of Proposition \ref{ILRn2min}]
We can regard $M^{(n)}$ as follows. Let $\sigma_{2n+1}^{n-1}$ be the $(n-1)$-skeleton of the $(2n+1)$-simplex with vertices $a_0,a_1,\dots,a_{2n},a_{2n+1}$, where we identify the vertex $c$ with $a_{2n+1}$. Then $M^{(n)}$ is obtained from $\sigma_{2n+1}^{n-1}$ by attaching all $n$-simplices of the form $\rho * \{a,b\}$ with $\rho \in \varDelta^{n-1}(\sigma_{2n}^{n-1})$. It is known that $\sigma_{2n+1}^{n-1}$ has intrinsically linkedness with respect to its embedding into $\mathbb{R}^{2n-1}$, and in particular, there exists an embedding $h \colon \sigma_{2n+1}^{n-1} \to \mathbb{R}^{2n-1}$ whose image contains exactly one two-component link consisting of two $(n-1)$-spheres with ${\rm lk}_2 = 1$ \cite{T00}. Without loss of generality, by relabeling the vertices if necessary, we may assume that this two-component link is given by 
\begin{eqnarray*}
h(\partial |a_{n+1} \cdots a_{2n+1}|)\sqcup h(\partial |a_0 \cdots a_n|).
\end{eqnarray*}

Consider $\mathbb{R}^{2n-1}$ as a subspace of $\mathbb{R}^{2n}$. Place vertex $a$ in the half-space where the $2n$-th coordinate is positive and vertex $b$ where it is negative. For each $(n-1)$-simplex $\rho \in \varDelta^{n-1}(\sigma_{2n}^{n-1})$, take the join $h(\rho) * \{a,b\}$. This construction yields an embedding $f \colon M^{(n)} \to \mathbb{R}^{2n}$. In this embedding, for any $\lambda = \gamma \sqcup \gamma' \in \Lambda^{n-1,n-1}(\sigma_{2n+1}^{n-1})$ and $\mu = \gamma \sqcup (\gamma' * \{a,b\}) \in \Lambda^{n-1,n}(M^{(n)})$, the mod $2$ linking number of $f(\lambda)$ in $\mathbb{R}^{2n-1}$ coincides with that of $f(\mu)$ in $\mathbb{R}^{2n}$. Hence the two-component link
\begin{eqnarray*}
f(\partial |a_{n+1} \cdots a_{2n+1}|) \ \sqcup \ f(\partial |a_0 \cdots a_n| * \{a,b\}),
\end{eqnarray*}
consisting of an $(n-1)$-sphere and an $n$-sphere, is the unique two-component llink in $f(M^{(n)})$ with ${\rm lk}_2 = 1$.

Now define two subcomplexes $N_1$ and $N_2$ of $M^{(n)}$ by
\begin{eqnarray*}
N_1 &=& M^{(n)} \setminus \bigl\{ |a_{n+2} \cdots a_{2n}| * a_{2n+1} \bigr\}, \\
N_2 &=& M^{(n)} \setminus \bigl\{ |a_0 \cdots a_{n-1}| * a \bigr\}.
\end{eqnarray*}
Since $N_1$ does not contain the $(n-1)$-simplex $|a_{n+2} \cdots a_{2n}| * a_{2n+1}$, the first $n$-sphere $f(\partial |a_0 \cdots a_n| * \{a,b\})$ no longer appears in $f(N_1)$. Similarly, $f(N_2)$ does not contain the second sphere. Therefore, neither $f(N_1)$ nor $f(N_2)$ contains any two-component link consisting of an $(n-1)$-sphere and an $n$-sphere with ${\rm lk}_2 = 1$.

By the symmetry of $M^{(n)}$, every maximal proper subcomplex of $M^{(n)}$ (i.e., every subcomplex obtained by deleting exactly one $n$-simplex) is isomorphic to either $N_1$ or $N_2$. This implies that for any proper subcomplex $N$ of $M^{(n)}$, there exists an embedding $f \colon N \to \mathbb{R}^{2n}$ such that ${\rm lk}_2(f(\lambda)) = 0$ for every $\lambda \in \Lambda^{n-1,n}(N)$.
\end{proof}


%
{\normalsize
}

\end{document}